\newtheorem{theorem}{Theorem}[section]
\newtheorem{proposition}[theorem]{Proposition}
\newtheorem{lemma}[theorem]{Lemma}
\theoremstyle{definition}
\newtheorem{remark}[theorem]{Remark}
\newcommand{\Ps}{\mathbb{P}}
\newcommand{\Z}{\mathbb{Z}}
\newcommand{\R}{\mathbb{R}}
\newcommand{\N}{\mathbb{N}}
\newcommand{\K}{\mathbb{K}}
\newcommand{\End}{\mathrm{End}}
\title{A uniform Tits alternative for endomorphisms of the projective line}
\author{Alonso Beaumont\thanks{IRMAR - UMR CNRS 6625, Université de Rennes. E-mail: alonso.beaumont@univ-rennes.fr. Research supported by the European Research Council (ERC Groups of Algebraic Transformations 101053021).}} 
\date{January 2025}
\begin{document}

\maketitle

\begin{abstract}
    The recent article \cite{BHPT24} establishes an analog of the Tits alternative for semigroups of endomorphisms of the projective line. The proof involves a ping-pong argument on arithmetic height functions. Extending this method, we obtain a uniform version of the same alternative. In particular, we show that semigroups of $\End(\Ps^{1})$ of exponential growth are of uniform exponential growth.
\end{abstract}

\section{Introduction}

Let $\K$ be an algebraically closed field and let $V$ be a projective variety over $\K$. We denote by $\End(V)$ the set of endomorphisms of $V$ defined over $\K$, which has the structure of a semigroup when endowed with the composition operation. Which types of growth rate can occur for finitely generated subsemigroups of $\End(V)$, and how to characterize semigroups of a given growth rate? This is a natural generalization of the study of finitely generated linear semigroups, an overview of which can be found in \cite{Okn98}.\bigskip

For any $f\in \End(V)$, let $\mathrm{PrePer}(f)$ be its set of \textit{preperiodic points}, that is, the set of points $z\in V(\K)$ whose orbits under the iteration of $f$ are finite. An endomorphism $f\in \End(V)$ is said to be \textit{polarized} by an ample line bundle $\mathcal{L}$ if $f^{*}\mathcal{L}\cong\mathcal{L}^{\otimes d}$ for some $d\in\Z_{\geq2}$. The integer $d$ is called the \textit{algebraic degree} of $f$. The notion of polarized endomorphisms was introduced in \cite{Zha95}. Such an endomorphism is also finite (see \cite{Ser60}, and \cite[§5]{Fak03} for the case of positive characteristic).\bigskip

The purpose of this note is to prove the following result:

\begin{theorem}
    \label{uni1}
    Let $f_{1},f_{2}\in\End(V)$ be polarized by the same line bundle and suppose that $\mathrm{PrePer}(f_{1})\neq\mathrm{PrePer}(f_{2})$. Then $f_{1}$ and $f_{2}$ generate a free semigroup of rank $2$.
\end{theorem}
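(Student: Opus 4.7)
The plan is to argue by contraposition: assume the subsemigroup generated by $f_1,f_2$ is not free, and deduce that $\mathrm{PrePer}(f_1)=\mathrm{PrePer}(f_2)$. A failure of freeness produces distinct words $w_1\ne w_2$ over $\{f_1,f_2\}$ with $w_1=w_2$ in $\End(V)$. Since polarized endomorphisms are finite and therefore surjective, any relation $a\circ f=b\circ f$ forces $a=b$ on the dense image $f(V)=V$. After cancelling common rightmost letters, one may therefore assume the relation has the form
\[
u\circ f_1 \;=\; v\circ f_2
\]
for some words $u,v$ (possibly empty); applying this to an arbitrary point $z$ gives $u(f_1(z))=v(f_2(z))$.

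The main tool I would use is the Call--Silverman canonical height. After a standard spreading-out reduction (replacing $\K$ by the algebraic closure of a suitable number field over which everything is defined), each $f_i$ admits a canonical height $\hat h_{f_i}$ satisfying $\hat h_{f_i}\circ f_i=d_i\,\hat h_{f_i}$ and vanishing exactly on $\mathrm{PrePer}(f_i)$. Because $f_1$ and $f_2$ share the polarization $\mathcal L$, both heights differ from a fixed Weil height $h_{\mathcal L}$ by bounded functions, hence $\hat h_{f_1}-\hat h_{f_2}=O(1)$. Iterating the two functional equations along a word $w=f_{i_n}\circ\cdots\circ f_{i_1}$ yields, for each $j\in\{1,2\}$,
\[
\hat h_{f_j}\bigl(w(z)\bigr) \;=\; D(w)\,\hat h_{f_j}(z)+O\bigl(D(w)\bigr),\qquad D(w)=\prod_k d_{i_k}.
\]
Using the hypothesis $\mathrm{PrePer}(f_1)\ne\mathrm{PrePer}(f_2)$, one can pick a test point $z_0$, say in $\mathrm{PrePer}(f_1)\setminus\mathrm{PrePer}(f_2)$, so that $\hat h_{f_1}(z_0)=0$ and $\hat h_{f_2}(z_0)=\beta>0$; the bounded-difference estimate automatically forces $\beta=O(1)$.

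Evaluating $\hat h_{f_1}$ on the two sides of $u(f_1(z_0))=v(f_2(z_0))$ and invoking the degree equality $D(u)d_1=D(v)d_2$, one arrives at an identity of the schematic form $O(D(u))=D(v)d_2\,\beta+O(D(v))$. Since $\beta$ itself is only known to be $O(1)$, this crude bound is not contradictory by itself: the error terms in the iteration formula are of the same order of magnitude as the leading ``signal'' $D(v)d_2\beta$. This is the main obstacle, and the heart of the argument: one has to sharpen the height comparison. Following and extending the ping-pong strategy of \cite{BHPT24}, I would replace the single point $z_0$ by a carefully chosen family of points --- for instance orbits of the form $f_1^{a}f_2^{b}(z_0)$, along which the defect functions $\hat h_{f_j}(f_k z)-d_k\hat h_{f_j}(z)$ telescope --- in such a way that the accumulated error becomes strictly smaller than the leading term depending on $\beta$. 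With a sufficiently precise such estimate, the two expressions for $\hat h_{f_1}$ of the common point $u(f_1(z_0))=v(f_2(z_0))$ are genuinely incompatible, ruling out any non-trivial relation and thereby establishing that $f_1$ and $f_2$ generate a free semigroup of rank $2$.
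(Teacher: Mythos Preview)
Your setup is sound: the reduction to a relation $u\circ f_1 = v\circ f_2$ via right cancellation (using surjectivity of polarized maps) is correct, and so is your identification of the main obstacle, namely that the $O(D(w))$ error in $\hat h_{f_j}(w(z)) = D(w)\,\hat h_{f_j}(z) + O(D(w))$ is of the same order as the signal $D\beta$ once $\beta$ is merely $O(1)$. But you do not actually overcome this obstacle. Your proposed repair --- choosing orbits $f_1^{a} f_2^{b}(z_0)$ along which defects ``telescope'' --- is a hope, not an argument: you give no mechanism by which the accumulated error becomes strictly smaller than the leading term, and in the critical case $d_1=d_2=2$ (so $1/d_1+1/d_2=1$) no naive pointwise estimate of this type will succeed. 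This is precisely the regime that the method of \cite{BHPT24} could not handle without first passing to high iterates, which is exactly what you are trying to avoid.

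The paper's proof takes a different route that sidesteps pointwise estimates entirely. Instead of evaluating heights at a test point, it works in the complete metric space $\mathcal H_{\mathcal L}$ of functions $h$ with $\|h-h_{\mathcal L}\|_\infty<\infty$, equipped with the sup norm. Each $f_i$ induces an \emph{injective} contraction $\alpha_i:h\mapsto d_i^{-1}f_i^{*}h$ of ratio $1/d_i$ whose fixed point is the canonical height $h_{f_i}$; the hypothesis $\mathrm{PrePer}(f_1)\neq\mathrm{PrePer}(f_2)$ forces $h_{f_1}\neq h_{f_2}$. Since $1/d_1+1/d_2\le 1$, one then invokes a ping-pong lemma for pairs of injective contractions (Proposition~\ref{pingpong}): if the associated attractor $A$ is disconnected, a classical ping-pong applies; if $A$ is connected, a Hausdorff-measure argument shows $H^{1}(\alpha_1(A)\cap\alpha_2(A))=0$, which supports a measurable ping-pong. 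Freeness of $\langle\alpha_1,\alpha_2\rangle$ then yields freeness of $\langle f_1,f_2\rangle$. The passage from pointwise height inequalities to this abstract contraction argument on the space of heights is the idea missing from your proposal.
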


This theorem is proven using a ping-pong argument on \textit{height functions}, which will be introduced in Section \ref{sec2}. The argument used is essentially a refinement of the methods in \cite[§3]{BHPT24}, and answers Question 5.1 of the same article.\bigskip

Theorem \ref{uni1} allows us to obtain results concerning uniform independence and uniform exponential growth for certain subsemigroups of $\End(V)$. These results are best stated using the following definitions:

Let $S$ be any semigroup. Two elements of $S$ are said to be \textit{independent} if they generate a free semigroup of rank $2$. For any non-empty finite subset $F\subset S$, we define the \textit{diameter of independence} of $F$ as
\[\Delta(F):=\inf\;\{n\in\Z_{\geq1}\;|\; F\cup F^{2}\cup\cdots\cup F^{n} \textrm{ contains two independent elements}\}\]
where $F^{n}=\{f_{1}\cdots f_{n};\; f_{i}\in F\}$. The \textit{algebraic entropy} of $F$ is defined as
\[\Sigma(F):=\lim_{n\rightarrow\infty}\frac{1}{n}\log\#(F\cup F^{2}\cup\cdots\cup F^{n}).\]
Note that $\Delta(F)$ may be infinite, whereas $\Sigma(F)$ is always finite. Moreover, we have the following inequality:
\begin{equation}
    \label{eq1}
    \Sigma(F)\geq \log(2)/\Delta(F). 
\end{equation}
If $S$ is finitely generated, its diameter of independence and its algebraic entropy are respectively defined as
\[\Delta(S):=\sup_{F}\Delta(F)\quad\mathrm{and}\quad\Sigma(S):=\inf_{F}\Sigma(F)\]
where $F$ ranges over all finite generating sets of $S$. The semigroup $S$ is of \textit{exponential growth} if $\Sigma(F)>0$ for some (equivalently, for any) finite generating set $F$ of $S$; it is of \textit{uniform exponential growth} if $\Sigma(S)>0$.\bigskip

In order to accomodate for automorphisms, we introduce the following notion: $f\in\End(V)$ is \textit{semi-polarized} by $\mathcal{L}$ if $f^{*
}\mathcal{L}\cong\mathcal{L}^{\otimes d}$ for some $d\in\mathbb{Z}_{\geq1}$. In particular, we allow $d=1$.

\begin{theorem}
    \label{uni2}
    Let $V$ be a projective variety over an algebraically closed field $\K$. Let $S$ be a finitely generated subsemigroup of endomorphsims of $V$, all semi-polarized by the same line bundle. Suppose there are $f_{1},f_{2}\in S$ of algebraic degree at least $2$ such that $\mathrm{PrePer}(f_{1})\neq\mathrm{PrePer}(f_{2})$. Then $\Delta(S)\leq2$. In particular, $S$ is of uniform exponential growth: $\Sigma(S)\geq\log(2)/2$.
\end{theorem}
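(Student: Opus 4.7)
My plan is to prove $\Delta(S)\le 2$, after which $\Sigma(S)\ge\log(2)/2$ is immediate from inequality (\ref{eq1}). Fix an arbitrary finite generating set $F$ of $S$ and argue by contradiction: assume no pair in $F\cup F^2$ is independent. The contrapositive of Theorem \ref{uni1} then forces all polarized elements of $F\cup F^2$ (i.e.\ those of algebraic degree $\ge 2$) to share the same preperiodic set. Writing $f_1=a_1\cdots a_k$ with $a_j\in F$, the equality $\deg f_1=\prod_j\deg a_j\ge 2$ ensures at least one $a_j$ is itself polarized; fix such an $a\in F$ and set $P:=\mathrm{PrePer}(a)$, so that $\mathrm{PrePer}(g)=P$ for every polarized $g\in F\cup F^2$.

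My next step is to upgrade this to $\mathrm{PrePer}(f)=P$ for every polarized $f\in S$, which will contradict $\mathrm{PrePer}(f_1)\neq\mathrm{PrePer}(f_2)$. A standard spreading-out/Lefschetz reduction lets us assume $\K=\bar{\mathbb{Q}}$, and I put $\hat h:=\hat h_a$. The key claim is the functional equation
\[
\hat h\circ h\;=\;(\deg h)\,\hat h\qquad\text{for every }h\in F.
\]
Once this is established, an induction on word length yields $\hat h\circ f=(\deg f)\hat h$ for every $f\in S$; the usual characterization of canonical heights (Weil height plus bounded, satisfying the functional equation) then forces $\hat h_f=\hat h$, so $\mathrm{PrePer}(f)=\{\hat h=0\}=P$ for every polarized $f\in S$. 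To prove the key claim, I would fix $h\in F$ and consider $g:=ah\in F^2$, which is polarized since $a$ is. By the contradiction hypothesis $\mathrm{PrePer}(g)=P=\{\hat h=0\}$, and invoking the arithmetic-dynamical result that polarized endomorphisms by the same $\mathcal{L}$ with equal preperiodic sets have equal canonical heights (a consequence of Yuan's equidistribution theorem), one obtains $\hat h_g=\hat h$. The functional equation of $\hat h_g$ then reads $\hat h\circ a\circ h=(\deg a)(\deg h)\hat h$; comparing with $\hat h(a(h(z)))=(\deg a)\hat h(h(z))$ (which follows from $\hat h\circ a=(\deg a)\hat h$) yields $\hat h\circ h=(\deg h)\hat h$.

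The hard part will be the arithmetic-dynamical input identifying a polarized endomorphism's canonical height from its preperiodic set, and verifying it applies in the full generality of an arbitrary projective variety $V$ over an arbitrary algebraically closed field (which is where the Lefschetz reduction is used). Once that is in place, the remainder is routine bookkeeping: the functional equation propagates from $F$ to all of $S$, both $\mathrm{PrePer}(f_1)$ and $\mathrm{PrePer}(f_2)$ collapse to $P$, yielding the desired contradiction. Hence every finite generating set $F$ of $S$ satisfies $\Delta(F)\le 2$, so $\Delta(S)\le 2$, and $\Sigma(S)\ge\log(2)/2$ follows from inequality (\ref{eq1}).
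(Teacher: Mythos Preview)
Your approach is essentially the same as the paper's: both argue by contradiction, use the contrapositive of Theorem~\ref{uni1} to force all polarized elements of $F\cup F^{2}$ to share a common preperiodic set $P$, and then propagate this to all of $S_{\geq2}$ via the arithmetic rigidity results of Yuan--Zhang (characteristic~$0$) and Carney (positive characteristic). The paper packages the propagation as a separate lemma (Lemma~\ref{gen}): it shows $P$ is invariant under every generator, applies Northcott to get $P\subset\mathrm{PrePer}(w)$ for each $w\in S_{\geq2}$, and then invokes rigidity. You instead derive the height functional equation $\hat h\circ h=(\deg h)\hat h$ for each $h\in F$ directly from $\hat h_{ah}=\hat h_a$ and induct on word length. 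These are interchangeable routes through the same deep input; your version is arguably a bit cleaner since it stays on the height side throughout.

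One genuine (though easily repaired) gap: your ``spreading-out/Lefschetz reduction to $\bar{\mathbb Q}$'' is only valid in characteristic zero. In positive characteristic you cannot reduce to the algebraic closure of a finite field without trivializing the hypothesis, so you must instead work over an infinite finitely generated field $K$ with the corresponding Weil height, exactly as the paper does in \S\ref{sec2}, and cite Carney's theorem rather than Yuan's equidistribution for the implication $\mathrm{PrePer}(g)=\mathrm{PrePer}(a)\Rightarrow \hat h_g=\hat h_a$. With that correction your argument goes through.
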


In the case where $\mathrm{char(\K)=0}$ and $V=\Ps^{1}$, we may combine Theorem \ref{uni2} with \cite[Proposition 4.10]{BHPT24}, along with results of E. Breuillard and T. Gelander \cite{BG05} in the linear case, to obtain the following

\begin{theorem}
    \label{uni3}
    Let $S$ be a finitely generated subsemigroup of $\End(\Ps^{1})$ over a field of characteristic $0$. Then either $S$ is of polynomial growth, or $\Delta(S)<+\infty$. In particular, semigroups of exponential growth in $\End(\Ps^{1})$ are of uniform exponential growth.
\end{theorem}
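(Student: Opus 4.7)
The plan is to split into three regimes according to the algebraic degrees of the elements of $S$ and apply one of the three cited ingredients in each. The preliminary observation is that every $f\in\End(\Ps^{1})$ of algebraic degree $d$ satisfies $f^{*}\mathcal{O}(1)\cong\mathcal{O}(d)$; so every endomorphism of $\Ps^{1}$ is semi-polarized by $\mathcal{O}(1)$, and those of degree at least $2$ are polarized by it. In particular, the hypotheses of Theorem \ref{uni2} are available as soon as $S$ contains two elements of degree $\geq 2$ with different preperiodic sets.

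First, if $S$ contains two elements of algebraic degree $\geq 2$ with distinct preperiodic sets, then Theorem \ref{uni2} applies directly and yields $\Delta(S)\leq 2$. Second, if $S$ contains at least one element of algebraic degree $\geq 2$ but all such elements of $S$ share a common preperiodic set, then \cite[Proposition 4.10]{BHPT24} provides a rigid structural description of $S$ in characteristic zero — essentially, its degree-$\geq 2$ part consists of iterates of a single polarized endomorphism, up to its finite centralizer in $\Aut(\Ps^{1})$ — from which polynomial growth of $S$ follows. Third, if every element of $S$ is an automorphism, then $S\subset\mathrm{PGL}_{2}(\K)$; letting $G=\langle S\rangle_{\mathrm{grp}}$, the classical Tits alternative splits into two sub-cases. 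If $G$ is virtually solvable, the Milnor--Wolf theorem gives $G$ polynomial growth, which is inherited by the subsemigroup $S$. Otherwise, the uniform Tits alternative of Breuillard--Gelander \cite{BG05} produces an absolute constant $N$ such that, for any finite semigroup-generating set $F$ of $S$, the product set $F^{N}$ contains two elements generating a non-abelian free subgroup of $G$, hence a free subsemigroup of $S$, and therefore $\Delta(S)\leq N<+\infty$.

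I expect the main obstacle to be the passage from groups to semigroups in the third case: Breuillard--Gelander's theorem, as usually stated, locates free generators inside $(F\cup F^{-1})^{N}$ rather than inside the positive product set $F^{N}$. The standard route around this is to first locate, inside $F^{M}$ for a uniform $M$, two loxodromic Möbius transformations whose fixed-point pairs on $\Ps^{1}$ are quantitatively ``transverse'' (via an appropriate place of $\K$), and then raise them to uniformly bounded positive powers in order to play ping-pong on $\Ps^{1}$; both steps rely on the quantitative linear-group analysis of \cite{BG05}. A secondary delicate point will be verifying that the second case really outputs polynomial growth rather than some larger intermediate regime, which depends on making the classification of \cite[Proposition 4.10]{BHPT24} fully explicit in our setting (e.g.\ identifying $S$, up to finite index, with a subsemigroup of the centralizer of a power/Chebyshev/Lattès map, which is itself virtually abelian).
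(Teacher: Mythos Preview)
Your three-case split and the three ingredients (Theorem~\ref{uni2}, \cite[Proposition 4.10]{BHPT24}, and \cite{BG05}) match the paper's, but the linear case is mishandled. The assertion ``if $G$ is virtually solvable, the Milnor--Wolf theorem gives $G$ polynomial growth'' is false: Milnor--Wolf says a finitely generated virtually solvable group has polynomial growth if and only if it is virtually \emph{nilpotent}, and there are virtually solvable, non-virtually-nilpotent subgroups of $\mathrm{PGL}_{2}$ of exponential growth (the paper's remark immediately after the proof makes exactly this point). The correct dichotomy is virtually nilpotent versus not: in the first sub-case $S$ has polynomial growth; in the second, the Breuillard--Gelander result, in the form quoted as Theorem~\ref{linear}, applies to any $F\subset\mathrm{GL}_{n}(K)$ generating a non-virtually-nilpotent group and already bounds $\Delta(F)$ in terms of \emph{positive} products, so your anticipated obstacle about $(F\cup F^{-1})^{N}$ versus $F^{N}$ does not arise. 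Note also that the resulting bound depends on the field $K$ and hence on $S$; this still yields $\Delta(S)<+\infty$, but your claim of an absolute $N$ is too strong.

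Two smaller gaps. First, you do not treat constant maps, which lie in $\End(\Ps^{1})$ but are not semi-polarized by $\mathcal{O}(1)$, so Theorem~\ref{uni2} is not directly available when they are present; the paper disposes of them by noting they are left-absorbing and hence can never belong to an independent pair, giving $\Delta(S)=\Delta(S_{\geq 1})$. Second, in your case 2 the paper uses \cite[Proposition 4.10]{BHPT24} directly in contrapositive form (not polynomial growth and $S_{\geq 2}\neq\varnothing$ imply two elements of $S_{\geq 2}$ with distinct preperiodic sets) rather than via a structural classification, so the ``secondary delicate point'' you flag does not need to be verified.
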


\section{A ping-pong lemma for contractions}
\label{sec1}

The notions introduced in this section are presented in greater detail in the book \cite{Fal85}. The setting of the book is Euclidean space, but the results still hold in the generality in which we use them.\bigskip

Let $(X,d)$ be a non-empty complete metric space, in which we fix a base point $x_{0}$. A map $\alpha:X\rightarrow X$ is called a \textit{contraction with ratio} $c\in]0,1[$ if
\[\forall x,y\in X,\;d(\alpha(x),\alpha(y))\leq c\cdot d(x,y).\]
By Banach's fixed point theorem, $\alpha$ has a unique fixed point in $X$. We denote by $\mathrm{Con}(X)$ the set of contractions on $X$, a semigroup when endowed with the composition operation. For any subset $F\subset \mathrm{Con}(X)$, we denote by $\langle F\rangle$ the subsemigroup generated by $F$.

\paragraph{The attractor.} Let $\alpha_{1},\alpha_{2}$ be two contractions with ratios $c_{1},c_{2}$, and let $C=\{\alpha_{1},\alpha_{2}\}^{\N}$. For any sequence $u=(\alpha_{n_{i}})_{i\geq0}\in C$, the sequence $((\alpha_{n_{0}}\cdots\alpha_{n_{i}})(x_{0}))_{i\geq0}$ is Cauchy, and therefore converges to an element $x_{u}$ in $X$. If we endow $C$ with the product topology, the map
\begin{equation}
    \label{eq2}
    \pi:C\rightarrow X,\;u\mapsto x_{u}
\end{equation}
is continuous: its image $A$ is therefore a compact subset of $X$. The set $A$ is called the \textit{attractor} associated with the system $\{\alpha_{1},\alpha_{2}\}$. For $i\in\{1,2\}$, we have a commutative diagram
\[\begin{tikzcd}
    C && C \\
    A && A
    \arrow["{\sigma_{i}}", from=1-1, to=1-3]
    \arrow["{\pi}", from=1-1, to=2-1]
    \arrow["{\pi}", from=1-3, to=2-3]    \arrow["{\alpha_{i}}", from=2-1, to=2-3]
\end{tikzcd}\]
where $\sigma_{i}:(\alpha_{n_{0}},\alpha_{n_{1}},\cdots)\mapsto(\alpha_{i},\alpha_{n_{0}},\alpha_{n_{1}},\cdots)$. Since $C=\sigma_{1}(C)\cup\sigma_{2}(C)$, $A$ satisfies the self-similarity relation $A=\alpha_{1}(A)\cup\alpha_{2}(A)$. It is in fact the unique non-empty compact set that satisfies this relation (see \cite[§8.3]{Fal85}).\bigskip

For any subset $Y\subset X$, we denote its diameter by $\mathrm{diam}(Y)$. The \textit{one-dimensional Hausdorff measure} of $Y$ is defined as follows (see \cite[§1.2]{Fal85}):
\[H^{1}(Y)=\lim_{\varepsilon\rightarrow 0}\;\inf\left\{\sum_{i}\mathrm{diam}(U_{i})\;\middle|\; Y\subset\bigcup_{i}U_{i}\;,\; 0<\mathrm{diam}(U_{i})\leq\varepsilon\right\}\]

\begin{proposition}
    \label{pingpong}
    Let $\alpha_{1},\alpha_{2}:X\rightarrow X$ be two injective contractions with ratios $c_{1},c_{2}\in]0,1[$. Suppose that $\alpha_{1}$ and $\alpha_{2}$ have distinct fixed points and $c_{1}+c_{2}\leq1$. Then $\langle\alpha_{1},\alpha_{2}\rangle$ is a free semigroup of rank $2$.
\end{proposition}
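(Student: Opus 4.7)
The plan is to run a ping-pong argument directly on the attractor, using the two pieces \(A_i := \alpha_i(A)\); note that \(A = A_1 \cup A_2\). First I will pin down a diameter inequality: if \(z \in A_1 \cap A_2\) exists, then for any \(x \in A_1\) and \(y \in A_2\) the triangle inequality gives
\[d(x,y) \leq d(x,z) + d(z,y) \leq \mathrm{diam}(A_1) + \mathrm{diam}(A_2) \leq (c_1+c_2)\mathrm{diam}(A).\]
Combined with \(\mathrm{diam}(A)>0\) (the fixed points \(p_1,p_2\in A\) are distinct) and the hypothesis \(c_1+c_2\leq 1\), this forces \(c_1+c_2=1\) with equality throughout; in particular the strict regime \(c_1+c_2<1\) already yields \(A_1\cap A_2=\emptyset\).

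Next I will establish freeness by induction on the total length \(|w_1|+|w_2|\) of a pair of distinct finite words \(w_1\neq w_2\), arguing that they must differ as maps on \(X\). The base case of two length-\(1\) words is immediate from \(p_1\neq p_2\). When \(w_1,w_2\) share their outermost letter \(\alpha_i\), injectivity of \(\alpha_i\) strips it off and the induction hypothesis applies; should one word be exactly \(\alpha_i\) and the other \(\alpha_i v\) with \(v\) non-empty, the identity \(\alpha_i = \alpha_i v\) forces \(v=\mathrm{id}\) on \(X\), impossible for a contraction on a space with more than one point.

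The heart of the argument is the case \(w_1=\alpha_1 w_1',\ w_2=\alpha_2 w_2'\) with distinct first letters: under \(w_1=w_2\) as maps, \(w_1(A)=w_2(A)\subset A_1\cap A_2\), and \(w_1(A)\) is an injective, hence uncountable, image of \(A\). If one of \(w_1',w_2'\) is empty, say \(w_2=\alpha_2\) and \(\alpha_1 w_1'=\alpha_2\), then \(A_2\subset A_1\), so \(A=A_1=\alpha_1(A)\) and \(\mathrm{diam}(A)\leq c_1\mathrm{diam}(A)\), contradicting \(c_1<1\). This also handles the strict regime \(c_1+c_2<1\) outright, since there \(A_1\cap A_2=\emptyset\) already precludes \(w_1(A)\subset A_1\cap A_2\).

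The hard part will be the boundary sub-case \(c_1+c_2=1\) with both \(w_1',w_2'\) non-empty. Here I need to upgrade the diameter tightness to the statement \(|A_1\cap A_2|\leq 1\), so that \(A_1\cap A_2\) is too small to host the uncountable copy \(w_1(A)\). The intended approach: suppose for contradiction two distinct points \(z_1,z_2\in A_1\cap A_2\), write \(z_i=\alpha_1(a_i)=\alpha_2(b_i)\) with \(a_i,b_i\in A\), and combine the Lipschitz lower bounds \(d(a_1,a_2)\geq d(z_1,z_2)/c_1\) and \(d(b_1,b_2)\geq d(z_1,z_2)/c_2\) with the forced equalities in the triangle inequality applied to diameter-realising pairs of \(A\). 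Iterating backward via \(\alpha_1^{-1},\alpha_2^{-1}\) on the attractor should then produce pairs of points in \(A\) with mutual distance growing by factors \(1/c_i>1\), eventually exceeding \(\mathrm{diam}(A)\) --- the sought contradiction.
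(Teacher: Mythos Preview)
Your route is genuinely different from the paper's. The paper splits on whether the attractor $A$ is connected: in the disconnected case a longest-common-prefix argument gives $\alpha_1(A)\cap\alpha_2(A)=\varnothing$ outright; in the connected case it brings in one-dimensional Hausdorff measure, proving $H^{1}(A)=\mathrm{diam}(A)>0$ and $H^{1}(A_1\cap A_2)=0$, and then runs a measure-theoretic ping-pong. You avoid both the dichotomy and Hausdorff measure, aiming instead for the sharper elementary statement $|A_1\cap A_2|\leq 1$ in the boundary case $c_1+c_2=1$. (Minor remark: you only need $|w_1(A)|\geq 2$, not uncountability; this follows from injectivity and $|A|\geq 2$.)

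There is, however, a real gap in your iteration towards $|A_1\cap A_2|\leq 1$. After pulling $z_1,z_2\in A_1\cap A_2$ back through $\alpha_1^{-1}$ to $a_1,a_2\in A$, nothing you have written prevents $a_1\in A_1\setminus A_2$ and $a_2\in A_2\setminus A_1$; in that configuration neither $\alpha_1^{-1}$ nor $\alpha_2^{-1}$ applies to the pair simultaneously, and the iteration stalls. Invoking the ``forced equalities in the triangle inequality applied to diameter-realising pairs'' is the right instinct, but you have not explained how it keeps the backward iteration alive.

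Here is the missing mechanism, which does complete your argument. Carry a third \emph{anchor} point along with the moving pair. Start from a diameter-realising pair $(p,q)$ with $p\in A_1$, $q\in A_2$; your equalities give $d(p,z_i)=c_1 D=\mathrm{diam}(A_1)$, so $p,z_1,z_2$ all lie in $A_1$. Pulling back through $\alpha_1^{-1}$ produces $p',z_1',z_2'\in A$ with $d(p',z_i')=D$, i.e.\ each $(p',z_i')$ is again diameter-realising. This forces $p'$ into exactly one piece (were $p'\in A_1\cap A_2$, the $z_i'$ would lie in neither) and therefore \emph{both} $z_i'$ into the other piece $A_{k_1}$; moreover any fixed element of $A_1\cap A_2$ is at distance $c_{k_1}D$ from each $z_i'$ and serves as the next anchor. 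Iterating, the moving pair always lands in a common piece and its mutual distance grows by a factor $1/c_{k_n}>1$ at every step, eventually exceeding $D$ --- the contradiction you wanted. With this addition your proof works and is arguably cleaner than the paper's, yielding the stronger conclusion $|A_1\cap A_2|\leq 1$ rather than merely $H^{1}(A_1\cap A_2)=0$.
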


\begin{proof}
    Let $A$ be the attractor associated with $\{\alpha_{1},\alpha_{2}\}$ and $\pi:C\rightarrow A$ be the map introduced in equation (\ref{eq2}). The first step of the proof is adapted from \cite{BH85}.
    
    \paragraph{Step 1.} Suppose that $A$ is disconnected. We will prove that $\alpha_{1}(A)\cap\alpha_{2}(A)=\varnothing$ and conclude that $\langle\alpha_{1},\alpha_{2}\rangle$ is free of rank 2.
    
    Since $A$ is disconnected, there are non-empty compact subsets $A_{1},A_{2}$ such that $A_{1}\cup A_{2}=A$ and $A_{1}\cap A_{2}=\varnothing$. By the compactness of $A$,
    \[\inf\;\{d(x_{1},x_{2});\;x_{1}\in A_{1},\;x_{2}\in A_{2}\}=:\delta>0.\]
    For $u_{1},u_{2}\in C$, let $\lambda(u_{1},u_{2})$ be their largest common prefix, and consider the set $\mathcal{P}=\{\lambda(u_{1},u_{2});\;(u_{1},u_{2})\in \pi^{-1}(A_{1})\times \pi^{-1}(A_{2})\}$. If we choose an integer $n\geq0$ such that $\delta\geq (\max(c_{1},c_{2}))^{n}\cdot\mathrm{diam}(A)$, then the words in $\mathcal{P}$ are of length at most $n$. We may therefore choose a prefix $p\in \mathcal{P}$ of maximal length, as well as a pair $(pu_{1},pu_{2})\in \pi^{-1}(A_{1})\times \pi^{-1}(A_{2})$. If there existed an element $y\in\alpha_{1}(A)\cap\alpha_{2}(A)$, say
    \[y=\pi(\alpha_{1}v_{1})=\pi(\alpha_{2}v_{2}),\]
    then without loss of generality, $p(y)=\pi(p\alpha_{1}v_{1})=\pi(p\alpha_{2}v_{2})\in A_{1}$, and $pu_{2}$ would have a common prefix with either $p\alpha_{1}v_{1}$ or $p\alpha_{2}v_{2}$ which would be longer than $p$, contradicting its maximality. We deduce that $\alpha_{1}(A)\cap\alpha_{2}(A)=\varnothing$. 
    
    This allows us to perform a ping-pong argument. Suppose there is a pair of distinct words $w_{1},w_{2}$ in the alphabet $\{\alpha_{1},\alpha_{2}\}$ such that $w_{1}=w_{2}$ in $\langle\alpha_{1},\alpha_{2}\rangle$. Since $\alpha_{1}$ and $\alpha_{2}$ are injective, left cancellation implies that $\alpha_{1}w_{1}'=\alpha_{2}w_{2}'$ in $\langle\alpha_{1},\alpha_{2}\rangle$ for some pair of words $w_{1}',w_{2}'$. But then
    \[\alpha_{1}w_{1}'(A)=\alpha_{1}w_{1}'(A)\cap\alpha_{2}w_{2}'(A)\subset\alpha_{1}(A)\cap\alpha_{2}(A)=\varnothing\]
    which is absurd. We conclude that $\langle\alpha_{1},\alpha_{2}\rangle$ is free of rank $2$.
    
    \paragraph{Step 2.} Suppose now that $A$ is connected. We will prove that $H^{1}(A)=\mathrm{diam}(A)$ and that $\alpha_{1}(A)$ and $\alpha_{2}(A)$ have an intersection of zero $H^{1}$-measure. This will allow us to perform a measurable version of the ping-pong argument above.
    
    By the self-similarity relation that characterizes the attractor, we have for all $n\geq0$,
    \[\bigcup_{w\in\{\alpha_{1},\alpha_{2}\}^{n}}w(A)=A.\]
    Since $\alpha_{1}$ and $\alpha_{2}$ are contractions, we have
    \[\mathrm{diam}(\alpha_{1}(A))+\mathrm{diam}(\alpha_{2}(A))\leq c_{1}\cdot\mathrm{diam}(A)+c_{2}\cdot\mathrm{diam}(A)=(c_{1}+c_{2})\cdot\mathrm{diam}(A)\]
    and more generally, for all $n\geq0$,
    \[\sum_{w\in\{\alpha_{1},\alpha_{2}\}^{n}}\mathrm{diam}(w(A))\leq(c_{1}+c_{2})^{n}\cdot\mathrm{diam}(A).\]
    This construction provides arbitrarily fine covers of $A$, so that $H^{1}(A)\leq\mathrm{diam}(A)$, and $H^{1}(A)=0$ whenever $c_{1}+c_{2}<1$.
    
    Let $x,y\in A$ be such that $d(x,y)=\mathrm{diam}(A)$. The map
    \[A\rightarrow[0,\mathrm{diam}(A)],\;z\mapsto d(x,z)\]
    is surjective because it is continuous and $A$ is connected. Moreover, this map is also $1$-Lipschitz, so $\mathrm{diam}(A)=H^{1}([0,\mathrm{diam}(A)])\leq H^{1}(A)$ (\cite[Lemma 1.8]{Fal85}). Note that $\mathrm{diam}(A)>0$ because the fixed points of $\alpha_{1}$ and $\alpha_{2}$ are distinct. In view of the upper bounds on $H^{1}(A)$ obtained above, we must have $c_{1}+c_{2}=1$ and $H^{1}(A)=\mathrm{diam}(A)$. Finally,
    \[H^{1}(A)\leq H^{1}(\alpha_{1}(A))+H^{1}(\alpha_{2}(A))\leq c_{1}H^{1}(A)+c_{2}H^{1}(A)=H^{1}(A)\]
    so $H^{1}(\alpha_{1}(A)\cap\alpha_{2}(A))=0$.
    
    Suppose there is a distinct pair of words $w_{1},w_{2}$ in the alphabet $\{\alpha_{1},\alpha_{2}\}$ such that $w_{1}=w_{2}$ in $\langle\alpha_{1},\alpha_{2}\rangle$. As in the first step, we may assume that $w_{1}=\alpha_{1}w_{1}'$ and $w_{2}=\alpha_{2}w_{2}'$. Let $B=w_{1}(A)=w_{2}(A)$, a connected compact subset of $A$. Since $\alpha_{1}$ and $\alpha_{2}$ are injective, $B$ has non-zero diameter and thus $H^{1}(B)>0$. But $H^{1}(B)=H^{1}(\alpha_{1}w_{1}'(A)\cap\alpha_{2}w_{2}'(A))=0$, which is absurd. We conclude that $\langle\alpha_{1},\alpha_{2}\rangle$ is free of rank $2$.
\end{proof}

\begin{remark}
    The bound $c_{1}+c_{2}\leq1$ in Proposition \ref{pingpong} is sharp. Indeed, for $n\geq2$, let $c_{n}$ be the solution in $]\frac{1}{2},1[$ to the equation $x+x^{2}+\cdots+x^{n}=1$. Then $\alpha_{1}:\R\rightarrow\R,\;x\mapsto c_{n}x$ and $\alpha_{2}:\R\rightarrow\R,\;x\mapsto c_{n}x+1$ are injective contractions (similitudes, even) with common ratio $c_{n}$, but $\alpha_{1}\alpha_{2}^{n}=\alpha_{2}\alpha_{1}^{n}$. By increasing $n$, we get values of $c_{n}$ that are arbitrarily close to $\frac{1}{2}$.
\end{remark}

\section{Proof of the Theorems}
\label{sec2}

\paragraph{Heights.} Let $K$ be a finitely generated field, $V$ a projective variety over $K$ and $\mathcal{L}$ an ample line bundle on $V$. We also fix an algebraic closure $\bar{K}$ of $K$. If $K$ has positive characteristic, we may assume that it is infinite (otherwise every endomorphism over $K$ has the same set of preperiodic points and our theorems are vacuous). We then fix a \textit{Weil height function} $h_{\mathcal{L}}:V(\bar{K})\rightarrow\R$, whose construction is detailed in \cite[§2.4]{BG06}. If $K$ is of characteristic zero, we fix a \textit{Moriwaki height function} $h_{\mathcal{L}}:V(\bar{K})\rightarrow\R$, first defined in \cite{Mor00}. Note that if $K$ is a number field, the Moriwaki height coincides with the classical Weil height. In both cases, the function $h_{\mathcal{L}}$ satisfies two properties:
\begin{itemize}
    \item[]\textbf{Northcott property.} The set $\{x\in V(\bar{K})\;|\;h_{\mathcal{L}}(x)\leq a,\;[K(x):K]\leq b\}$ is finite for any $a,b>0$.
    \item[]\textbf{Functoriality.} If $f\in\End(V)$ is defined over $K$ and is polarized by $\mathcal{L}$ with algebraic degree $d$, then $d\cdot h_{\mathcal{L}}$ and $f^{*}h_{\mathcal{L}}$ differ by a bounded function.
\end{itemize}
These constructions and their properties are also summarized in \cite[§3.2]{BHPT24}. The discussion above motivates the introduction of
\[\mathcal{H}_{\mathcal{L}}:=\{h:V(\bar{K})\rightarrow\R\;|\;\|h-h_{\mathcal{L}}\|_{\infty}<+\infty\},\]
a complete metric space when endowed with the metric $d(h_{1},h_{2}):=\|h_{1}-h_{2}\|_{\infty}$. We also define, for any endomorphism $f$ polarized by $\mathcal{L}$ of algebraic degree $d$,
\[\alpha_{f}:\mathcal{H}_{\mathcal{L}}\rightarrow\mathcal{H}_{\mathcal{L}},\;h\mapsto\frac{1}{d}f^{*}h.\]
By the surjectivity of $f:V(\bar{K})\rightarrow V(\bar{K})$, we have
\[\forall h_{1},h_{2}\in \mathcal{H}_{\mathcal{L}},\quad \|\alpha_{f}(h_{1})-\alpha_{f}(h_{2})\|_{\infty}=\frac{1}{d}\|h_{1}-h_{2}\|_{\infty}\] so $\alpha_{f}$ is an injective contraction with ratio $\frac{1}{d}$. By Banach's fixed point theorem, $\alpha_{f}$ has a unique fixed point in $\mathcal{H}_{\mathcal{L}}$ which is called the \textit{canonical height} $h_{f}$ associated with $f$ and $\mathcal{L}$. It therefore satisfies $f^{*}h_{f}=d\cdot h_{f}$. By the Northcott property, we have $\{x\in V(\bar{K})\;|\;h_{f}(x)=0\}=\mathrm{PrePer}(f)$.

\begin{proof}[Proof of Theorem \ref{uni1}]
    We are given two endomorphisms $f_{1},f_{2}$ of a projective variety $V$, polarized by the same line bundle $\mathcal{L}$, say $f_{i}^{*}\mathcal{L}\cong\mathcal{L}^{\otimes d_{i}}$ with $d_{i}\geq2$, and with distinct sets of preperiodic points. We may fix a finitely generated field over which $f_{1},f_{2},V$ and $\mathcal{L}$ are defined. This allows us to define the space $\mathcal{H}_{\mathcal{L}}$, as well as the maps $\alpha_{1},\alpha_{2}:\mathcal{H}_{\mathcal{L}}\rightarrow\mathcal{H}_{\mathcal{L}}$ associated with $f_{1}$ and $f_{2}$. These are injective contractions with ratios $\frac{1}{d_{1}},\frac{1}{d_{2}}\leq\frac{1}{2}$ and fixed points $h_{f_{1}},h_{f_{2}}$.
    
    Note that the preperiodic points of a polarized endomorphism are isolated, as a consequence of \cite[Corollary 2.2]{Fak03}. In particular, the points in $\mathrm{PrePer}(f_{i})$ defined over our original field $\K$ coincide with the points in $\mathrm{PrePer}(f_{i})$ over $\bar{K}$. Now, since we assume that $\mathrm{PrePer}(f_{1})\neq\mathrm{PrePer}(f_{2})$, we obtain $h_{f_{1}}\neq h_{f_{2}}$. We can therefore apply Proposition \ref{pingpong}: the semigroup generated by $\alpha_{1}$ and $\alpha_{2}$ is free of rank $2$. Finally, any relation of the form $f_{i_{1}}\cdots f_{i_{n}}=f_{j_{1}}\cdots f_{j_{m}}$ in $\End(V)$ would imply a relation $\alpha_{i_{n}}\cdots \alpha_{i_{1}}=\alpha_{j_{m}}\cdots \alpha_{j_{1}}$. From this we conclude that $f_{1}$ and $f_{2}$ generate a free semigroup of rank $2$.
\end{proof}

In order to prove Theorem \ref{uni2}, we need to adapt \cite[Lemma 4.5]{BHPT24}. For any subset $F$ of endomorphisms of $V$ semi-polarized by the same line bundle, and any $d\geq1$, we denote by $F_{d}$ the set of elements of $F$ of algebraic degree $d$, and by $F_{\geq d}$ the set of elements of $F$ of algebraic degree at least $d$. 

\begin{lemma}
    \label{gen}
    Let $S$ be a subsemigroup of $\End(V)$, all of whose elements are semi-polarized by the same line bundle. Suppose there is a generating set $F$ of $S$ such that for any $\sigma,\tau\in F_{1}\cup\{\mathrm{id}\}$ and any $f,g\in F_{\geq2}$, we have $\mathrm{PrePer}(\sigma f)=\mathrm{PrePer}(\tau g)$. Then $\mathrm{PrePer}(w_{1})=\mathrm{PrePer}(w_{2})$ for any $w_{1},w_{2}\in S_{\geq2}$.
\end{lemma}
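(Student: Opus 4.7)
The plan is to construct a single height function $h_{\star} \in \mathcal{H}_{\mathcal{L}}$ that serves as the canonical height of every element of $S_{\geq 2}$ simultaneously; the conclusion $\mathrm{PrePer}(w) = \{h_{\star} = 0\}$ is then the same set, independent of $w \in S_{\geq 2}$. Setting $\sigma = \tau = \mathrm{id}$ in the hypothesis shows that all $f \in F_{\geq 2}$ share a common preperiodic locus $P$; I fix $f_{0} \in F_{\geq 2}$ and set $h_{\star} := h_{f_{0}}$, so that $\{h_{\star} = 0\} = P$.

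The heart of the argument is to verify two invariance properties of $h_{\star}$. First, for every $g \in F_{\geq 2}$, I claim $g^{*} h_{\star} = (\deg g) \cdot h_{\star}$, equivalently $h_{g} = h_{\star}$. This is an instance of the arithmetic principle drawn from \cite{BHPT24}: two endomorphisms polarized by the same line bundle, both of algebraic degree at least $2$ and sharing the same preperiodic locus, must have the same canonical height. It applies because $\mathrm{PrePer}(g) = P = \mathrm{PrePer}(f_{0})$ by hypothesis. Second, for every $\sigma \in F_{1}$, I claim $\sigma^{*} h_{\star} = h_{\star}$. Indeed, $\sigma f_{0}$ is polarized of degree $d_{f_{0}} \geq 2$ with $\mathrm{PrePer}(\sigma f_{0}) = P$, so the same principle applied to $\sigma f_{0}$ and $f_{0}$ yields $h_{\sigma f_{0}} = h_{\star}$; expanding the fixed-point equation $(\sigma f_{0})^{*} h_{\sigma f_{0}} = d_{f_{0}} \cdot h_{\sigma f_{0}}$ as $f_{0}^{*}(\sigma^{*} h_{\star}) = d_{f_{0}} \cdot h_{\star} = f_{0}^{*} h_{\star}$ and using that $f_{0}$ is finite, hence surjective on $V(\bar{K})$, so that pullback by $f_{0}$ is injective on functions, forces $\sigma^{*} h_{\star} = h_{\star}$.

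With these invariances in hand, the conclusion is formal. For any $w = x_{1} \cdots x_{n} \in S_{\geq 2}$ with $x_{i} \in F$, iterating yields $w^{*} h_{\star} = (\deg w) \cdot h_{\star}$; equivalently, $h_{\star}$ is a fixed point of the operator $\alpha_{w} : h \mapsto \frac{1}{\deg w} w^{*} h$ on $\mathcal{H}_{\mathcal{L}}$. Since $\deg w \geq 2$, $\alpha_{w}$ is a strict contraction of ratio at most $1/2$ with unique fixed point $h_{w}$, so $h_{\star} = h_{w}$ and $\mathrm{PrePer}(w) = \{h_{w} = 0\} = P$. The main obstacle is the arithmetic input \emph{same preperiodic locus implies same canonical height}, the nontrivial ingredient borrowed from the framework of \cite{BHPT24}; the rest is a formal manipulation of pullback operators on $\mathcal{H}_{\mathcal{L}}$, closely modelled on the proof of Theorem \ref{uni1}.
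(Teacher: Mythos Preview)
Your argument is correct, and it runs parallel to the paper's but organises the same ingredients differently. The paper works set-theoretically: it first shows that the common preperiodic locus $P$ satisfies $g(P)=P$ for every $g\in F_{\geq 2}$ and $\sigma(P)=P$ for every $\sigma\in F_{1}$ (the latter via $\sigma(P)=\sigma f(P)=P$, since $P$ is both $f$- and $\sigma f$-invariant), hence $w(P)=P$ for every $w\in S$; then Northcott gives $P\subset\mathrm{PrePer}(w)$, and the rigidity theorem of Yuan--Zhang \cite{YZ21a} (resp.\ Carney \cite{Car20} in positive characteristic), combined with the Zariski density of $P$ from \cite{Fak03}, upgrades this inclusion to $P=\mathrm{PrePer}(w)$. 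You instead invoke the rigidity theorem up front---packaged as ``equal preperiodic loci $\Rightarrow$ equal canonical heights''---to obtain $x^{*}h_{\star}=(\deg x)\,h_{\star}$ for every generator $x\in F$, after which $h_{w}=h_{\star}$ for all $w\in S_{\geq 2}$ is a formal pullback computation. Your route is tidier in that Northcott and the density of $P$ never appear explicitly (they are absorbed into the rigidity black box you cite); the paper's route makes the dynamics on $P$ more visible. One minor point: the principle you attribute to \cite{BHPT24} is really the content of \cite{YZ21a} and \cite{Car20}, which the paper cites directly.
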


\begin{proof}
    We may assume that $S_{\geq2}$ is non-empty. Therefore $F_{\geq2}$ is also non-empty: we fix $f\in F_{\geq2}$ and define $P=\mathrm{PrePer}(f)$. For any $g\in F_{\geq2}$, we may set $\sigma=\tau=\mathrm{id}$ to obtain $P=\mathrm{PrePer}(f)=\mathrm{PrePer}(g)$. Similarly, for any $\sigma=F_{1}$, we have $\mathrm{PrePer}(\sigma f)=\mathrm{PrePer}(f)=P$. In particular, $P$ is both $f$ and $\sigma f$ invariant, so $\sigma (P)=\sigma f(P)=P$. Let $w\in S_{\geq2}$. Then $w$ is a product of elements of $F$, and thus $w(P)=P$.
    
    Fix a finitely generated field $K$ over which $S$ is defined, and consider the canonical height $h_{f}$ of $f$. We have $P=\{x\in V(\bar{K})\;|\;h_{f}(x)=0\}$, so by the Northcott property, all $w$-orbits in $P$ are finite. In other words, $P\subset \mathrm{PrePer}(w)$. As a result of \cite[Theorem 5.1]{Fak03}, $P$ is Zariski-dense in $V$. Hence, we can apply  \cite[Theorem 1.3, $(3)\Rightarrow (1)$]{YZ21a}, or \cite[Theorem A, $(3)\Rightarrow (4)$]{Car20} for the case of positive characteristic, to conclude that  $P=\mathrm{PrePer}(w)$.
\end{proof}

\begin{proof}[Proof of Theorem \ref{uni2}]
    Let $S$ be a finitely generated subsemigroup of $\End(V)$, all of whose elements are semi-polarized by the same line bundle. Let $F$ be any finite generating set of $S$. If $\mathrm{PrePer}(w_{1})\neq\mathrm{PrePer}(w_{2})$ for some $w_{1},w_{2}\in S_{\geq2}$, then by Lemma \ref{gen}, $\mathrm{PrePer}(\sigma f)\neq\mathrm{PrePer}(\tau g)$ for some $\sigma,\tau\in F_{1}\cup\{\mathrm{id}\}$ and $f,g\in F_{\geq2}$. Applying Theorem \ref{uni1}, we deduce that $\sigma f$ and $\tau g$ are independent. Since $\sigma f, \tau g\in F\cup F^{2}$, we have $\Delta(F)\leq2$ and thus $\Delta(S)\leq 2$. In particular, using inequality (\ref{eq1}),
    \[\Sigma(S)\geq\log(2)/\Delta(S)\geq\log(2)/2>0,\]
    so $S$ is of uniform exponential growth.
\end{proof}

In order to treat the linear case, we shall need the following result from E. Breuillard and T. Gelander.

\begin{theorem}[{\cite[Theorem 2.3]{BG05}}]
    \label{linear}
    Let $K$ be a finitely generated field, and let $n\geq1$. There exists a constant $c(n,K)<+\infty$ such that for any subset $F\subset \mathrm{GL}_{n}(K)$ that generates a non virtually nilpotent group, $\Delta(F)\leq c(n,K)$.
\end{theorem}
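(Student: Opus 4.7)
The plan is to prove the quantitative Tits alternative via a uniform ping-pong argument on projective space, with the uniformity coming from height estimates in the finitely generated field $K$.

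I would first analyze the Zariski closure $G\subset\mathrm{GL}_{n}$ of the group $\langle F\rangle$. Since this group is not virtually nilpotent, the identity component $G^{0}$ is not nilpotent, and there are two regimes to handle, echoing the classical Tits alternative: either $G$ is virtually solvable but not virtually nilpotent, or $G$ admits a non-trivial semisimple quotient and hence contains a non-abelian free subgroup.

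In the semisimple case, the goal is to produce two proximal elements of $\langle F\rangle$ in bounded word length, with attracting and repelling fixed points on $\Ps^{n-1}(\bar{K})$ disjoint enough to support a classical ping-pong argument on projective space. This requires two ingredients: first, a quantitative escape-from-subvarieties lemma, ensuring that one can avoid any proper $G$-invariant subvariety within word length bounded by a function of $n$ alone; and second, a Lehmer-type height lower bound for non-torsion elements of $\mathrm{GL}_{n}(K)$, valid over any finitely generated $K$, which provides a uniform spectral gap and hence a uniform contraction rate on $\Ps^{n-1}(\bar{K})$. The virtually solvable but not virtually nilpotent case must be treated separately: one exploits the existence of a non-nilpotent abelian quotient of $G$ to find, via commutator-type constructions and possibly a diagonal/unipotent decomposition in the spirit of Baumslag--Solitar presentations inside $\mathrm{GL}_{n}(K)$, a free subsemigroup of rank $2$ generated by elements of $F^{k}$ with $k$ bounded only in terms of $n$.

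The main obstacle lies in the uniformity of the constant $c(n,K)$: it must depend only on $n$ and $K$ and not on the specific generating set $F$. This hinges crucially on the Lehmer-type height gap over finitely generated fields, together with the fact that in each case above the relevant geometric quantities---contraction rates on projective space, separation between attracting and repelling regions, and depth of escape from proper subvarieties---can all be bounded from below in terms of this gap. Once such uniform control is obtained, the classical ping-pong construction applied to a suitably chosen pair from $F^{c(n,K)}$ produces two independent elements and hence a free subsemigroup of rank $2$, yielding the desired bound $\Delta(F)\leq c(n,K)$.
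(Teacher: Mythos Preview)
The paper does not prove this statement. Theorem~\ref{linear} is quoted verbatim from \cite[Theorem~2.3]{BG05} and used as a black box in the proof of Theorem~\ref{uni3}; there is no argument in the paper to compare your proposal against.

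As for the proposal itself: your outline is broadly in the spirit of the quantitative Tits alternative, but it conflates at least two distinct strategies from the Breuillard--Gelander circle of ideas. The original \cite{BG05} argument does not proceed via a Lehmer-type height gap over $K$; rather, it specializes to local fields and invokes their topological Tits alternative, obtaining uniformity from compactness of the relevant space of representations. The height-gap approach you describe is closer to Breuillard's later work. Either route is legitimate, but your sketch blurs them, and the step where you claim a ``Lehmer-type height lower bound for non-torsion elements of $\mathrm{GL}_{n}(K)$, valid over any finitely generated $K$'' is doing essentially all the work: such a statement is itself a deep theorem (Breuillard's height gap), not an ingredient one can simply invoke. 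In the virtually solvable, non virtually nilpotent case your description is also too vague to count as a plan; the actual argument there is delicate and is precisely the regime responsible for the dependence of $c(n,K)$ on $K$ rather than on $n$ alone.
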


\begin{proof}[Proof of Theorem \ref{uni3}]
    Let $S$ be a finitely generated subsemigroup of $\End(\Ps^{1})$ which is not of polynomial growth. Fix a finitely generated field $K$ over which $S$ is defined. Since constant maps in $\End(\Ps^{1})$ are left-absorbing, they cannot be part of an independent pair. In other words, $\Delta(S)=\Delta(S_{\geq1})$, so we may assume that $S$ contains no non-constant maps.
    
    If $S_{\geq2}$ is non-emtpy, then by \cite[Proposition 4.10]{BHPT24}, $\mathrm{PrePer}(f)\neq\mathrm{PrePer}(g)$ for some $f,g\in S_{\geq2}$, and so by Theorem \ref{uni2} we have $\Delta(S)\leq2<+\infty$. Suppose then that $S$ only contains elements of degree $1$. Let $F$ be any finite generating set of $S$. The group generated by $F$ in $\mathrm{PGL}_{2}(K)$ is not of polynomial growth, so is not virtually nilpotent (\cite{Wol68}). By Theorem \ref{linear}, there exists a constant $c<+\infty$ that only depends on $K$ such that $\Delta(F)\leq c$. We conclude that $\Delta(S)\leq c<+\infty$.
    
    In particular, if $S$ is of exponential growth, then $\Sigma(S)\geq\log(2)/\Delta(S)>0$ so $S$ is of uniform exponential growth.
\end{proof}

\begin{remark}
    Note that in the case $S_{\geq2}\neq\varnothing$, the upper bound on the diameter of independence is absolute, whereas in the linear case, it depends on the choice of semigroup. Indeed, there is no uniform upper bound on $\Delta(S)$ for all semigroups $S\subset \mathrm{PGL_{2}}$ of exponential growth (see \cite[Theorem 1.7]{Bre06}; this phenomenon only occurs when $S$ generates a virtually solvable, non virtually nilpotent group). On the other hand, the existence of a uniform lower bound on $\Sigma(S)$ for all semigroups $S\subset \mathrm{PGL_{2}}$ of exponential growth is still unknown; in fact it would imply Lehmer's conjecture (see \cite[Question 1.8]{Bre06}).
\end{remark}

\bibliographystyle{amsalpha}
\bibliography{bib}

\end{document}